\theoremstyle{definition}
\newtheorem{defi}{Definition}[section]
\newtheorem{lem}[defi]{Lemma}
\newtheorem*{mteo}{Main Theorem}
\newtheorem{ex}[defi]{Example}
\newtheorem{obs}[defi]{Remark}
\DeclareMathOperator{\rad}{\operatorname{rad}}
\DeclareMathOperator{\md}{\operatorname{mod}}
\DeclareMathOperator{\Hom}{\operatorname{Hom}}
\title{Leaps in the depth of compositions of irreducible morphisms}
\author{Viktor Chust\thanks{Corresponding author: viktorchust.math@gmail.com}, Flávio U. Coelho, \\ Institute of Mathematics and Statistics, University of São Paulo}
\date{}
\begin{document}

\maketitle

\begin{abstract}
In this article, we give a family of examples of algebras, showing that for every $n \geq 2$ and $m  \geq 0$, there is an algebra displaying a path of $n$ irreducible morphisms between indecomposable modules whose composite lies in the $(n+m+3)$-th power of the radical, but not in the $(n+m+4)$-th power. Such an algebra may be also supposed to be string and representation-finite.

\vspace{1ex}

\noindent \textit{MSC 2020: Primary 16G70; Secondary 16G20.}

\noindent \textit{Keywords: irreducible morphisms, compositions of irreducible morphisms, string algebras}
\end{abstract}

\section*{Introduction}

Here we deal with finite-dimensional algebras over an algebraically closed field $k$. Given an algebra $A$, we study the category $\md A$ of finitely generated, right modules over $A$.

Given two indecomposable $A$-modules $X$ and $Y$, we denote by $\rad_A(X,Y)$ the set of non-isomorphisms $X \rightarrow Y$, and we call the elements of $\rad_A(X,Y)$ as \textbf{radical morphisms}. This definition is extended to general modules as follows: $\rad_A(\oplus_{i=1}^n X_i,\oplus_{j=1}^m Y_j) = \oplus_{i=1}^n\oplus_{j=1}^m \rad_A(X_i,Y_j)$. Actually $\rad_A$ is an ideal of the category $\md A$: composing a radical morphism with any other morphism results in a radical morphism. With that, we can consider the \textbf{powers} of the ideal $\rad_A$ - these are defined recursively by: $\rad_A^0 = \Hom_A, \rad_A^1= \rad_A, \rad_A^n = \rad_A^{n-1} \cdot \rad_A$, where the product $\cdot$ stands for composition of morphisms. We also define $\rad_A^{\infty} = \cap_{n \geq 0} \rad_A^n$.

If $X,Y$ are indecomposable, then a morphism $X \rightarrow Y$ is called \textbf{irreducible} if it belongs to $\rad_A(X,Y) \setminus \rad_A^2(X,Y)$. Irreducible morphisms are of key importance, since, as shown by Auslander-Reiten theory, these morphisms generate any other morphism modulo $\rad^{\infty}$.

As stated above, a single irreducible morphism between indecomposable modules belongs to $\rad$, but not $\rad^2$. The composite of $n$ irreducible morphisms belongs to $\rad^n$, but it is not true in general that it does not belong to $\rad^{n+1}$. (See \cite{CCT1} for an example of 2 irreducible morphisms whose composite is non-zero and belongs to $\rad^{\infty}$). A classical result in this theory is the Igusa-Todorov Theorem (\cite{IT1}, \S 13), which says that the composite of $n$ irreducible morphisms along a sectional path belongs to $\rad^n \setminus \rad^{n+1}$ (and in particular is non-zero).

Deciding in which cases there might be a non-zero composition of $n$ irreducible morphisms belonging to $\rad^{n+1}$ has become an interesting line of investigation. For example, \cite{CCT1} fully characterizes the case of 2 irreducible morphisms whose composite belongs to $\rad^3 \setminus \{0\}$.

Then, \cite{AlC} proved that if the composite of two irreducible morphisms belongs to $\rad^3$, it must also belong to $\rad^5$. A few years later, \cite{C} showed that if the composition of 3 irreducible morphisms belongs to $\rad^4$, then it must belong to $\rad^6$. So we see that the depth (i.e., the power of the radical ideal to which a morphism belongs) of compositions of irreducible morphisms can be expected to make `leaps', which in both cases $n=2$ and $n=3$ would be a leap of size 3.

Recently, in \cite{CGS}, Chaio et al. have stated as an open problem that if the composition of $n$ irreducible morphisms between indecomposable modules belongs to $\rad^{n+1}$, then it must also belong to $\rad^{n+3}$, i.e., a leap of size 3 is always to be expected. They have also shown there that for every $n \geq 3$ and every $m \geq 4$, there is a \textit{string algebra} (see definition below) for which there is a path of $n$ irreducible morphisms $f_1,\ldots,f_n$ between indecomposable modules whose composition $f_n \ldots f_1$ belongs to $\rad^{n+m} \setminus \rad^{n+m+1}$, and such that the compositions $f_n \ldots f_2$ and $f_{n-1} \ldots f_1$ do not belong to $\rad^n$. This algebra can be supposed of finite type, i.e., it has a finite number of indecomposable modules up to isomorphism. This theorem thus exhibits a family of algebras, each of which displaying a composition of irreducible morphisms giving leaps of arbitrary size in the power of the radical, as long as this size is greater than 4. 

In this article we wish to contribute to this problem, inspired by this latter result. Here our goal is to give another family of examples in which there are leaps of arbitrary size greater than 3. However, we will need to drop the minimality condition above (i.e., that the composition of $n-1$ consecutive morphisms does not belong to $\rad^n$). The precise statement of our main theorem is as follows:

\begin{mteo}
\label{th:main}
    For $n \geq 2$ and $m \geq 0$, there is an algebra, which we will denote by $A(n,m)$, which is a string algebra of finite type such that there are $n$ irreducible morphisms between indecomposable modules whose composite (is non-zero) and belongs to $\rad^{n+m+3} \setminus \rad^{n+m+4}$.
\end{mteo}

So, the open problem posed in \cite{CGS} states that the depth of the composite of $n$ irreducible morphisms cannot be $n+1$ or $n+2$, and our result here adds to that, by saying that these are the only depths that could be forbidden for composites, examples being existent for any other depths.

This paper is organized the following way: Section~\ref{sec:notations} is devoted to necessary notations, while in Section~\ref{sec:main theorem}, after giving a motivating example, we proceed to the definition of the family of algebras $A(n,m)$ and then to the proof that they satisfy the desired properties.

\section{Notations}
\label{sec:notations}

A \textbf{quiver} $Q$ is a 4-uple $Q=(Q_0,Q_1,s,e)$, where $Q_0$ is a set of \textbf{vertices}, $Q_1$ is a set of \textbf{arrows}, and $s,e:Q_1 \rightarrow Q_0$ are two functions which give, respectively, the \textbf{start} and the \textbf{end} of an arrow. To each arrow $\alpha \in Q_1$ we can consider its formal inverse $\alpha^{-1}$, and we establish by convention that $s(\alpha^{-1}) = e(\alpha)$ and $e(\alpha^{-1}) = s(\alpha)$.

A \textbf{walk} over $Q$ is a sequence $\beta_n \ldots \beta_1$, where, for every $1 \leq i \leq n$, $\beta_i$ is either an arrow or the inverse of an arrow, and where $e(\beta_i) = s(\beta_{i+1})$ for every $1 \leq i < n$. We say that this walk is a \textbf{reduced walk} if additionally $\beta_i \neq \beta_{i+1}^{-1}$ for every $1 \leq i < n$. If $w= \beta_n \ldots \beta_1$ is a walk, we extend the notations used for start and end vertices: $s(w) \doteq s(\beta_1)$ and $e(w) \doteq e(\beta_n)$.

If $\beta_n \ldots \beta_1$ is a walk and all $\beta_i's$ are arrows (rather than inverses of arrows), then we say that this walk is a \textbf{path of length $n$}. Additionally, one associates to each vertex $x$ of $Q$ a trivial \textbf{path of length 0}, denoted by $\epsilon_x$. Of course, $s(\epsilon_x)=e(\epsilon_x) =x$.

Given a quiver $Q$, we can consider the $k$-vector space $kQ$ generated by all paths over $Q$. This space can be endowed with a product operation by defining the product of two paths by their concatenation and then extending linearly. This yields a $k$-algebra structure over $kQ$, and we call this the \textbf{path algebra} over $Q$.

Moreover, one can also consider an ideal $I$ of $kQ$, generated by a set of \textbf{relations} over $Q$. Usually, a relation is a $k$-linear combination of paths of length at least two, sharing the same start and end vertex. The quotient $kQ/I$ is said to be the path algebra over $Q$, \textbf{bound by} the relations in $I$.

Given a quiver $Q$, a \textbf{representation} $M$ over $Q$ is given by the assignment of a finite-dimensional $k$-vector space $M_x$ to each vertex $x \in Q_0$, and of a $k$-linear map $M_{\alpha}:M_{s(\alpha)} \rightarrow M_{e(\alpha)}$ to each arrow $\alpha \in Q_1$. If $A = kQ/I$ is a bound path algebra, it is well-known that the representations over $Q$ that vanish over the relations in $I$ form a category, which is equivalent to the category of finitely-generated $A$-modules.

If $A = kQ/I$ is a bound path algebra, we will denote by $P(i),I(i) \text{, and } S(i)$ respectively, the indecomposable projective, indecomposable injective and simple representations associated with vertex $i$.

For unexplained concepts of module theory and representation theory, we refer the reader to \cite{AC}, \cite{AC2}, or \cite{ARS}.

Next we define \textbf{string algebras}, which were introduced in \cite{BR} and will be considered throughout here:

\begin{defi}
\label{def:string}
    We say that a finite dimensional $k$-algebra $A$  is a \textbf{string algebra} if $A$ is isomorphic to a bound path algebra $kQ/I$ satisfying:

    \begin{enumerate}
        \item For every vertex $x \in Q_0$, there are at most two arrows starting and at most two arrows ending at $x$.

        \item Given an arrow $\alpha \in Q_1$ there is at most one arrow $\beta \in Q_1$ such that $\beta \alpha \notin I$ and at most one arrow $\gamma$ such that $\alpha \gamma \notin I$.

        \item The ideal $I$ is generated by a set of paths over $Q$ (that means, $A$ is a \textit{monomial} algebra).
    \end{enumerate}
\end{defi}

\begin{defi}
\label{def:string peak deep}
    Let $A = kQ/I$ be a string algebra.

    \begin{itemize}
        \item A \textbf{string} over $(Q,I)$ is either a zero-length path $\epsilon_{x}$ associated to a vertex $x$ of $Q$, or a reduced walk $C = c_n \ldots c_1$ over $Q$, such that no subwalk of $C$ or $C^{-1}$ belongs to $I$. 

        \item We say that a string $C$ \textbf{starts in a peak} if there is no arrow $\beta$ of $Q$ such that $C\beta$ is a string. We say that a string $C$ \textbf{starts in a deep} if there is no arrow $\gamma$ of $Q$ such that $C\gamma^{-1}$ is a string.

        \item Dually, we say that a string $C$ \textbf{ends in a peak} if the inverse string $C^{-1}$ starts in a peak and we say that a string $C$ \textbf{ends in a deep} if the inverse string $C^{-1}$ starts in a deep.
    \end{itemize}
\end{defi}

Besides defining string algebras, \cite{BR} also defines, for every string $C$ over a string algebra, a \textbf{string module} $M(C)$ associated to $C$, and then proves that every indecomposable module over a string algebra are either string modules or what the authors also introduce as \textit{band modules}. Additionally, the description of irreducible morphisms and Auslander-Reiten sequences between indecomposable modules over string algebras is fully known from \cite{BR}, pages 168-174. Unfortunately, we cannot include the full description here since it would require some significant space, but we will refer back to \cite{BR} for the details we need. 

\section{The main theorem}
\label{sec:main theorem}

Before proving our main theorem, we shall give an example which will motivate the definition of the family of algebras we shall consider.

\begin{ex}
Let $A$ be the path algebra given by the quiver

\begin{displaymath}
    \xymatrix{3 & 2 \ar[l]_{\beta_2}& 1 \ar[l]_{\beta_1} \ar@(ur,ul)[]_{\alpha}& 4 \ar[l]_{\gamma_1}& 5 \ar[l]_{\gamma_2}}
\end{displaymath}

bound by the relations $\alpha^2 = \beta_1 \gamma_1 = \beta_2 \beta_1 = 0$. The Auslander-Reiten quiver of $A$ (here displayed vertically due to space) is given by:

\begin{displaymath}
    \xymatrix{&& 3 \ar@{.}[dd] \ar[dl] & &&&&\\
    & {\begin{smallmatrix}
        2 \\ 3
    \end{smallmatrix}} \ar[dr]^{f_3} &&&&&& \\
    && 2 \ar@{.}[dd] \ar[dr]^{f_2} && {\begin{smallmatrix}
    1 \\ 2  
    \end{smallmatrix}} \ar@{.}[dd] \ar[dr]  \ar[dl]_{g_5} &&& \\
    &&& {\begin{smallmatrix}
      1 \\ 2 \hspace{1ex} 1 \\ \hspace{2ex} 2  
    \end{smallmatrix}} \ar@{.}[dd] \ar[dr]^{g_1}  \ar[dl]_{f_1} && 1  \ar@{.}[dd] \ar[dr] \ar[dl] && \\
    && {\begin{smallmatrix}
      1 \\ 1 \\ 2  
    \end{smallmatrix}} \ar@{.}[dd] \ar[dr] \ar[dl]&& {\begin{smallmatrix}
      1 \\ 2 \hspace{1ex} 1 
    \end{smallmatrix}} \ar@{.}[dd] \ar[dr]^{g_2} \ar[dl]&& {\begin{smallmatrix}
    4 \\ 1 
    \end{smallmatrix}} \ar@{.}[dd] \ar[dr] \ar[dl] & \\
    & {\begin{smallmatrix}
     4 \\ 1 \\ 1 \\ 2  
    \end{smallmatrix}} \ar@{.}[dd] \ar[dr] \ar[dl]&& {\begin{smallmatrix}
    1 \\ 1 
    \end{smallmatrix}} \ar@{.}[dd] \ar[dr]  \ar[dl]&& {\begin{smallmatrix}
      \hspace{1ex} 1 \hspace{1ex} 4 \\ 2 \hspace{1ex} 1 \hspace{1ex}
    \end{smallmatrix}} \ar@{.}[dd] \ar[dr]^{g_3}  \ar[dl]&& {\begin{smallmatrix}
    5 \\ 4 \\ 1  
    \end{smallmatrix}} \ar@{.}[dd] \ar[dl] \\
     {\begin{smallmatrix}
     5 \\ 4 \\ 1 \\ 1 \\ 2  
    \end{smallmatrix}} \ar[dr] && {\begin{smallmatrix}
    4 \\ 1 \\ 1
    \end{smallmatrix}} \ar@{.}[dd] \ar[dr] \ar[dl]&& {\begin{smallmatrix}
    1 \hspace{1ex} 4 \\ 1 
    \end{smallmatrix}} \ar@{.}[dd] \ar[dr] \ar[dl] && {\begin{smallmatrix}
      \hspace{3ex} 5 \\ \hspace{1ex} 1 \hspace{1ex} 4 \\ 2 \hspace{1ex} 1 \hspace{1ex}
    \end{smallmatrix}} \ar@{.}[dd] \ar[dr] ^{g_4}\ar[dl] & \\
    & {\begin{smallmatrix}
    5 \\ 4 \\ 1 \\ 1
    \end{smallmatrix}} \ar@{.}[dd] \ar[dr]  && {\begin{smallmatrix}
    \hspace{2ex} 4 \\ 4 \hspace{1ex} 1 \\ 1 
    \end{smallmatrix}} \ar@{.}[dd] \ar[dr] \ar[dl]&& {\begin{smallmatrix}
    \hspace{2ex} 5 \\ 1 \hspace{1ex} 4 \\ 1 
    \end{smallmatrix}} \ar@{.}[dd] \ar[dr] \ar[dl]&& {\begin{smallmatrix}
    1 \\ 2  
    \end{smallmatrix}} \ar[dl] \\
    &&{\begin{smallmatrix}
    \hspace{2ex} 5 \\ \hspace{2ex} 4 \\ 4 \hspace{1ex} 1 \\ 1 
    \end{smallmatrix}} \ar@{.}[dd] \ar[dr] \ar[dl]&& {\begin{smallmatrix}
    5 \hspace{1ex} 4 \\  4 \hspace{1ex} 1 \\ 1 
    \end{smallmatrix}} \ar@{.}[dd] \ar[dr] \ar[dl]&& 1 \ar[dl] & \\
    & 4 \ar@{.}[dd] \ar[dr] && {\begin{smallmatrix}
    \hspace{2ex} 5 \\ 5 \hspace{1ex} 4 \\ 4 \hspace{1ex} 1 \\ 1 
    \end{smallmatrix}} \ar[dr] \ar[dl] && {\begin{smallmatrix}
    4 \\ 1  
    \end{smallmatrix}} \ar[dl] && \\
    && {\begin{smallmatrix}
    5 \\ 4  
    \end{smallmatrix}} \ar[dl] && {\begin{smallmatrix}
    5 \\ 4 \\ 1  
    \end{smallmatrix}} &&& \\
    & 5 &&&&&&
    }
\end{displaymath}

In the quiver above, there are four modules, namely $\begin{smallmatrix}
    1 \\ 2
\end{smallmatrix},1, \begin{smallmatrix}
    4 \\ 1
\end{smallmatrix}$ e $\begin{smallmatrix}
    5 \\ 4 \\ 1
\end{smallmatrix}$, that have been written twice: their respective two copies should be identified. (It is a quiver `immersible' in a Möbius strip). Dotted lines represent Auslander-Reiten translations. The morphisms $f_1,f_2,f_3,g_1,g_2,g_3,g_4,g_5$ are all canonical inclusions or projections, and they are all irreducible. (Here $f_2$ and $g_5$ being canonical inclusions means that $2 \oplus \begin{smallmatrix}
    1 \\ 2
\end{smallmatrix} \xrightarrow{(f_2 \hspace{1ex} g_5)} \begin{smallmatrix}
    1 \\ 2 \hspace{1ex} 1 \\ \hspace{2ex} 2
\end{smallmatrix}$ is the canonical inclusion of the radical of the projective module $\begin{smallmatrix}
    1 \\ 2 \hspace{1ex} 1 \\ \hspace{2ex} 2
\end{smallmatrix}$ as a submodule).

Note that $f_1 f_2 f_3 = 0$ and that $f_1 g_5 g_4 g_3 g_2 g_1 f_2 f_3 \neq 0$. Actually, due to the Igusa-Todorov theorem, we know that $f_1 g_5 g_4 g_3 g_2 g_1 f_2 f_3 \in \rad^8 \setminus \rad^9$. Therefore, if we define $f_1' = f_1 + f_1 g_5 g_4 g_3 g_2 g_1$, then $f_1'$ is irreducible and $f_1' f_2 f_3 = f_1 g_5 g_4 g_3 g_2 g_1 f_2 f_3 \in \rad^8 \setminus \rad^9$. This is, therefore, an example of 3 irreducible morphisms between indecomposable modules whose composition belongs to $\rad^8 \setminus \rad^9$.

\end{ex}

Having concluded the example above, we proceed to the definition of the family of algebras for our main theorem. For $n \geq 2$ and $m \geq 0$, we define the path algebra $A(n,m)$ given by the quiver

\begin{displaymath}
    \xymatrix{n & \ldots \ar[l]_{\beta_{n-1}} & 2 \ar[l]_{\beta_2}& 1 \ar[l]_{\beta_1} \ar@(ur,ul)[]_{\alpha}& n+1 \ar[l]_{\gamma_1}& n+2 \ar[l]_{\gamma_2}& \ldots \ar[l]& n+m \ar[l]_{\gamma_m}}
\end{displaymath}

and bound by $\alpha^2 = \beta_1 \gamma_1 = \beta_2 \beta_1 = 0$. (If $n = 2$, the relation $\beta_2 \beta_1 = 0$ does not apply).

That way, the algebra from the example above coincides with the algebra $A(3,2)$.

\begin{obs}
Note that each $A(n,m)$ fulfills the conditions in Definition~\ref{def:string}, and therefore it is a string algebra, and we can use the results from \cite{BR}. In particular, one can use them to verify that each $A(n,m)$ has finite type.
\end{obs}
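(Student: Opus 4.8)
The statement has two independent parts: that each $A(n,m)$ is a string algebra in the sense of Definition~\ref{def:string}, and that it is of finite representation type. The plan is to treat them separately. The first is a direct verification of the three axioms against the explicit quiver and relations, and the second relies on the string/band classification of \cite{BR}, reducing everything to the absence of bands.

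For the string-algebra part I would check the three conditions of Definition~\ref{def:string} one by one. Condition (3) is immediate, since $I$ is generated by the paths $\alpha^2,\beta_1\gamma_1,\beta_2\beta_1$, so $A(n,m)$ is monomial. For (1), the only vertex that could fail the bound is vertex $1$, which carries the loop; there, exactly two arrows start ($\alpha$ and $\beta_1$) and exactly two end ($\alpha$ and $\gamma_1$), so the bound of two is met, while every other vertex lies on one of the two linear branches and has at most one incoming and one outgoing arrow. For (2), the content is again concentrated at vertex $1$: working arrow by arrow among $\alpha,\beta_1,\gamma_1$, I would check that of the (at most two) possible continuations exactly one avoids $I$ — for instance after $\alpha$ the continuation $\alpha$ is killed by $\alpha^2$ while $\beta_1\alpha\notin I$, and after $\gamma_1$ the continuation $\beta_1$ is killed by $\beta_1\gamma_1$ while $\alpha\gamma_1\notin I$, together with the dual statements for precomposition; the linear branches impose no extra constraint beyond $\beta_2\beta_1=0$. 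This establishes that $A(n,m)$ is a string algebra.

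For finite type I would invoke the dichotomy from \cite{BR}: every indecomposable over a string algebra is a string module or a band module, string modules are in bijection with strings up to inversion, and band modules occur in one-parameter families indexed by $k^{*}$, one family per band. Hence a string algebra is representation-finite precisely when it admits no band: a single band already yields infinitely many band modules, and, conversely, since $A(n,m)$ is finite-dimensional, the absence of bands also forces only finitely many strings to exist (an infinite family of strings would, by finiteness of the quiver and admissibility of $I$, produce a cyclic reduced walk involving a change of direction, i.e. a band). Thus the whole problem reduces to showing that $A(n,m)$ has no band.

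To rule out bands I would argue from the shape of the quiver. Deleting the loop $\alpha$ leaves the underlying graph a tree — two linear branches $1-2-\cdots-n$ and $1-(n+1)-\cdots-(n+m)$ meeting at vertex $1$ — in which every reduced closed walk is trivial. Hence any band, being a cyclic reduced walk, must traverse the loop. But the loop cannot be used twice inside a reduced walk avoiding $I$: two consecutive copies $\alpha\alpha$ (or $\alpha^{-1}\alpha^{-1}$) meet the relation $\alpha^2=0$, the pair $\alpha\alpha^{-1}$ fails reducedness, and two non-adjacent occurrences would require a nontrivial reduced closed walk in the tree between them, which does not exist. A band using the loop exactly once collapses to $\alpha$ (or $\alpha^{-1}$) itself, which is not a band since its square $\alpha^{2}=0$ is not a string. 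Therefore $A(n,m)$ has no band and is representation-finite. I expect the band-exclusion step to be the main obstacle: one must make precise that \emph{cyclic reduced walk} forces use of the loop and that every way of using the loop is obstructed — but the tree-plus-one-loop geometry together with the single relation $\alpha^{2}=0$ keeps this clean.
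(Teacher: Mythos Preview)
The paper gives no proof for this remark; it simply asserts that the verification is routine and refers the reader to \cite{BR}. Your proposal fills in exactly what the paper leaves implicit, and both parts of your argument are correct.

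One small refinement: the general implication ``no bands $\Rightarrow$ finitely many strings'' that you invoke for finite type is true for finite-dimensional string algebras but is not completely trivial to justify in one line. For $A(n,m)$ it is cleaner to bypass it and argue directly: the same tree-plus-loop reasoning you use for bands shows that \emph{any} string can contain $\alpha$ or $\alpha^{-1}$ at most once (two occurrences would be separated by a reduced closed walk in the tree, hence adjacent, hence forbidden by $\alpha^2=0$ or reducedness). The remaining tree portions are reduced walks in a path graph on $n+m$ vertices, so their length is at most $n+m-1$, and every string has length at most $2(n+m-1)+1$. This gives finitely many strings outright, and together with the absence of bands yields finite type without appealing to the general equivalence.
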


Having defined the family of algebras above, we have a couple of lemmas that will help with the proof of our main theorem:

\begin{lem}
\label{lem:s2 p1 t-1s2}
    For $n \geq 2$ and $m \geq 0$, there is an Auslander-Reiten sequence between $A(n,m)$-modules of the form:

    $$0 \rightarrow S(2) \rightarrow P(1) \rightarrow \tau^{-1} S(2) \rightarrow 0$$

    (which, in particular, has indecomposable middle term).
\end{lem}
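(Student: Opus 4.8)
The statement to prove is that over $A(n,m)$ there is an Auslander--Reiten sequence $0 \to S(2) \to P(1) \to \tau^{-1}S(2) \to 0$. The natural approach is to work entirely inside the combinatorial model of $\md A(n,m)$ provided by \cite{BR}: identify $S(2)$ and $P(1)$ as string modules, compute the almost split sequence starting at $S(2)$ using the Butler--Ringel rules for the left-hand term of an AR-sequence, and check that the middle term is exactly $P(1)$ (in particular indecomposable).

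**Step 1: express the relevant modules as strings.** First I would record that $S(2) = M(\epsilon_2)$, the string module of the trivial string at vertex $2$, and that $P(1)$ is the string module of the walk $\beta_1^{-1}\alpha$ (or its inverse), reading off from the quiver and relations $\alpha^2 = \beta_1\gamma_1 = \beta_2\beta_1 = 0$ that the radical of $P(1)$ is $S(1)\oplus S(2)$ sitting below the top $1$, so that $P(1) = \begin{smallmatrix} 1 \\ 2\ \ 1 \\ \ \ \ 2\end{smallmatrix}$ as displayed in the example. I would also note that the arrow $\beta_2$ ending at $2$ does not extend $\epsilon_2$ to the left because $\beta_2\beta_1 \notin I$ would be needed on the other side — more precisely one checks directly which one-sided additions of arrows or inverse arrows to $\epsilon_2$ yield strings.

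**Step 2: apply the Butler--Ringel construction of the AR-sequence at $S(2)$.** The rules in \cite{BR} (pp.~168--174) describe $\tau^{-1}C$ and the middle term of the almost split sequence $0 \to M(C) \to E \to M(\tau^{-1}C) \to 0$ in terms of adding hooks/cohooks to the string $C$ at each of its two ends. For $C = \epsilon_2$ one examines the two ends (both equal to the vertex $2$): on one side $2$ sits in a deep/peak configuration forced by the relations at vertex $1$ (namely $\alpha^2=0$ and $\beta_1\gamma_1=0$ constrain which walks through $1$ are allowed), and on the other side the arrow $\beta_2$ with $\beta_2\beta_1 = 0$ dictates the hook/cohook added there. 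Carrying out the prescription should produce a single string $D$ with $M(D) = P(1)$ and $\tau^{-1}S(2) = M(D')$ for the appropriate $D'$, and crucially the construction yields a \emph{connected} string for the middle term — i.e. the middle term is the indecomposable $P(1)$ rather than a direct sum — which is exactly the claimed indecomposability.

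**Main obstacle.** The only real work is bookkeeping: correctly determining, at each of the two ends of $\epsilon_2$, whether the Butler--Ringel recipe adds a hook or a cohook (this depends on the local peak/deep data at vertex $2$, which in turn depends on the relations touching vertex $1$), and then verifying that the resulting middle string is precisely the string presenting $P(1)$. A convenient cross-check, which I would include, is to confirm independently that $P(1)$ is non-projective-free in the right sense — rather, that $S(2) = \rad^2 P(1)$ embeds in $P(1)$ with $P(1)/S(2)$ indecomposable — and that $S(2)$ is not injective, so that an almost split sequence starting at $S(2)$ exists; combined with the general fact that if $0 \to S(2) \to E \to \tau^{-1}S(2) \to 0$ is almost split and $P(1)$ is a non-injective indecomposable with an irreducible map $S(2) \to P(1)$, then $P(1)$ is a summand of $E$, and the string computation shows $E$ has length equal to $\dim_k P(1)$ and is indecomposable, forcing $E \cong P(1)$. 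This reduces the proof to the two local hook/cohook determinations, which is routine given \cite{BR}.
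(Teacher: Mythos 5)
Your proposal is correct in substance and uses essentially the same tools as the paper: both arguments are direct applications of Butler--Ringel's combinatorial description of almost split sequences over string algebras. The only organizational difference is the direction of the computation. You propose to start from $S(2)=M(\epsilon_2)$ and run the hook/cohook prescription at its two ends, checking that the middle term comes out as a single connected string; the paper instead exhibits the string of the middle term directly, namely $C=\beta_1\alpha^{-1}\beta_1^{-1}$ (built from $\beta_1$ by composing with inverses of arrows), observes that $C$ begins in a deep and ends in a peak, and quotes the Corollary on p.~174 of \cite{BR}, which immediately yields $0\to M(\epsilon_2)\to M(C)\to M(\alpha^{-1}\beta_1^{-1})\to 0$ with indecomposable middle term; it then only remains to identify $M(\epsilon_2)=S(2)$ and $M(C)=P(1)$. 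Two small points to repair in your write-up: the walk $\beta_1^{-1}\alpha$ you assign to $P(1)$ is not composable ($e(\alpha)=1\neq 2=s(\beta_1^{-1})$) and has the wrong length --- the string presenting the four-dimensional module $P(1)$ has length three, of the form $\beta_1\alpha^{\pm1}\beta_1^{-1}$, as your own Loewy diagram shows; and your final ``cross-check'' presupposes the existence of an irreducible morphism $S(2)\to P(1)$, which is part of what the lemma establishes, so it cannot serve as independent confirmation. Neither point affects the viability of the main line of your argument.
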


\begin{proof}
    Starting from arrow $\beta_1$ and composing with inverses of arrows, we arrive at the string $C = \beta_1 \alpha^{-1} \beta_1^{-1}$, where $C$ begins in a deep and ends in a peak. Then \cite{BR}, Corollary on p. 174 gives us the following Auslander-Reiten sequence:

        $$0 \rightarrow M(\epsilon_2) \xrightarrow{f_2} M(\beta_1 \alpha^{-1} \beta_1^{-1}) \xrightarrow{f_1} M(\alpha^{-1} \beta_1^{-1}) \rightarrow 0$$

        where $f_2$ is a canonical inclusion and $f_1$ is a canonical projection. Now it remains to see that $M(\epsilon_2) = S(2)$ and $M(\beta_1 \alpha^{-1} \beta_1^{-1}) = P(1)$. 
\end{proof}

\begin{lem}
\label{lem:sectional path}
    For $n \geq 2$ and $m \geq 0$, there is a sectional path between $A(n,m)$-modules of the form

     \begin{align*}
     I(n) \xrightarrow{f_n}  \ldots \xrightarrow{f_4} I(3) \xrightarrow{f_3} S(2) \xrightarrow{f_2} &\\
    P(1) \xrightarrow{g_1} \tau^{-1} M(\beta_1) \xrightarrow{g_2} & M(\beta_1 \alpha^{-1} \gamma_1) \xrightarrow{g_3} M(\beta_1 \alpha^{-1} \gamma_1 \gamma_2) \xrightarrow{g_4} \\
    \ldots \xrightarrow{g_{m+1}} &M(\beta_1 \alpha^{-1} \gamma_1 \gamma_2) \xrightarrow{g_{m+2}} M(\beta_1) \xrightarrow{g_{m+3}}P(1) \xrightarrow{f_1} \tau^{-1} S(2)
     \end{align*}

     where the $f_i$'s and $g_i$'s are all canonical inclusions or projections.
\end{lem}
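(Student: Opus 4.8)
The plan is to realize this entire path as a concatenation of three pieces, each of which is a portion of the "wing" sitting above a string module in the Auslander--Reiten quiver of $A(n,m)$, and then invoke the characterization of irreducible morphisms and sectional paths from \cite{BR}, pp.~168--174. First I would identify all the modules involved as string modules: $I(n) = M(\beta_{n-1}\ldots\beta_1)$ (up to the obvious string), $I(j) = M(\beta_{j-1}\ldots\beta_1)$ for $3 \le j \le n$, $S(2) = M(\epsilon_2)$, $P(1) = M(\beta_1\alpha^{-1}\beta_1^{-1})$, and then $\tau^{-1}M(\beta_1)$, $M(\beta_1\alpha^{-1}\gamma_1)$, $M(\beta_1\alpha^{-1}\gamma_1\gamma_2)$, and so on, up to the symmetric ``descent'' ending in $\tau^{-1}S(2) = M(\alpha^{-1}\beta_1^{-1})$. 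Here I would use that in a string algebra an indecomposable module is a string module, that $I(j)$ and $P(1)$ have the string descriptions forced by the relations $\alpha^2 = \beta_1\gamma_1 = \beta_2\beta_1 = 0$, and Lemma~\ref{lem:s2 p1 t-1s2} for the identification of $\tau^{-1}S(2)$.

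Next I would recall the combinatorial rule from \cite{BR}: given a string $C$, the irreducible morphisms with source $M(C)$ are (up to the two ends) obtained by ``adding a hook on one side and deleting a cohook on the other'' (and dually for morphisms into $M(C)$), and these arrows form the mesh/wing structure of the AR quiver. A path of irreducible morphisms is sectional precisely when it contains no mesh-relation subpath, i.e.\ it never takes a step immediately followed by its ``reverse'' through a mesh. So the key step is: write down, for each consecutive pair in the displayed sequence, the one-step string operation (hook addition or cohook deletion on a specified side) that produces it, check that the resulting map is a canonical inclusion or projection, and verify that from $I(n)$ down to $P(1)$ we are consistently ``peeling from the top'' (a chain of canonical projections giving $I(n) \to \cdots \to I(3) \to S(2)$, then one more step $S(2) \to P(1)$ which is the canonical inclusion from Lemma~\ref{lem:s2 p1 t-1s2}), then $P(1) \to \tau^{-1}M(\beta_1) \to \cdots$ walks ``up the $\gamma$-arm'' by successive hook additions involving $\gamma_1,\gamma_2,\ldots,\gamma_m$, reaches a turning point, and then walks symmetrically back down by cohook deletions until it lands on $\tau^{-1}S(2)$. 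Because at no vertex of the path do we undo on one side what we just did — the ``add hook'' phase and the ``delete cohook'' phase are each monotone, and the hinge at $P(1)$ is handled by the AR sequence of Lemma~\ref{lem:s2 p1 t-1s2} — the composite stays on one side of every mesh, so the path is sectional.

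The main obstacle I anticipate is bookkeeping at the two ``corners'' of the path, not the generic steps. Specifically: (i) the transition $S(2) \xrightarrow{f_2} P(1) \xrightarrow{g_1} \tau^{-1}M(\beta_1)$ sits exactly at the AR sequence $0 \to S(2) \to P(1) \to \tau^{-1}S(2) \to 0$ from Lemma~\ref{lem:s2 p1 t-1s2}, and one must check that the irreducible map $P(1) \to \tau^{-1}M(\beta_1)$ is the *other* irreducible map out of $P(1)$ (the one not equal to $f_1$), so that the two-step path $S(2) \to P(1) \to \tau^{-1}M(\beta_1)$ is not the mesh through $\tau^{-1}S(2)$ and hence is sectional; and (ii) the ``turning point'' in the middle of the $\gamma$-arm, where the longest string $M(\beta_1\alpha^{-1}\gamma_1\cdots\gamma_m)$ is reached and we switch from hook-addition to cohook-deletion — one must confirm that passing through this vertex is still sectional, i.e.\ that the incoming and outgoing arrows there are not two branches of a single mesh. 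Both of these reduce to a direct inspection of the relevant meshes in the AR quiver of $A(n,m)$ using \cite{BR}; the generic hook-add and cohook-delete steps in between are routine and monotone, so they pose no difficulty. Finally, that the composite is nonzero and sits in exactly $\rad^{N}\setminus\rad^{N+1}$ for the appropriate $N$ will follow (in the proof of the main theorem) from the Igusa--Todorov theorem applied to this sectional path, so here it suffices to establish sectionality and the identification of the terms.
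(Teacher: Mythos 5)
Your plan follows essentially the same route as the paper's proof: identify each term as a string module, realize the irreducible maps as canonical inclusions/projections via the Butler--Ringel hook/cohook calculus, and verify sectionality by ruling out mesh subpaths at each vertex, with the two passages through $P(1)$ (both of which reduce to $M(\beta_1)\neq S(2)$) and the turning point at $M(\beta_1\alpha^{-1}\gamma_1\cdots\gamma_m)$ singled out as the only delicate spots --- exactly the checks the paper performs via explicit $\tau^{-1}$ computations. One small correction: because of the relation $\beta_2\beta_1=0$, the injectives are $I(j)=M(\beta_{j-1}\cdots\beta_2)$, supported on vertices $2,\dots,j$, not $M(\beta_{j-1}\cdots\beta_1)$.
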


\begin{proof}
    We shall divide the proof of this lemma through the facts stated below:

        \textit{Fact 1: We have a sectional path $I(n) \xrightarrow{f_n} \ldots \xrightarrow{f_4} I(3) \xrightarrow{f_3} S(2)$, where $f_n,\ldots,f_4,f_3$ are canonical projections:}

        Note that $I(3) = M(\beta_2)$, $I(4) = M(\beta_3 \beta_2)$, and so on until we have $I(n) = M(\beta_{n-1} \ldots \beta_3 \beta_2)$. Thus by the description of irreducible modules in \cite{BR}, we have the following irreducible morphisms given by canonical projections:

        \begin{align*}
            I(n) = M(\beta_{n-1} \beta_{n-2} \ldots \beta_3 \beta_2) &\xrightarrow{f_n} M(\beta_{n-2} \ldots \beta_3 \beta_2) = I(n-1) \\
            & \vdots \\
            I(4) = M(\beta_3 \beta_2) &\xrightarrow{f_4} M(\beta_2) = I(3) \\
            I(3) = M(\beta_2) &\xrightarrow{f_3} M(\epsilon_2) = S(2) \\
        \end{align*}

        Moreover, the path $f_n \ldots f_4 f_3$ is sectional because $I(n),\ldots,I(3)$ are injective modules.

        \textit{Fact 2: We have a sectional path $P(1) = M(\beta_1 \alpha \beta_1^{-1}) \xrightarrow{g_1} \tau^{-1} M(\beta_1) = M(\beta_1 \alpha^{-1}) \xrightarrow{g_2} M(\beta_1 \alpha^{-1} \gamma_1) \xrightarrow{g_3} M(\beta_1 \alpha^{-1} \gamma_1 \gamma_2) \xrightarrow{g_4} \ldots \xrightarrow{g_{m+1}} M(\beta_1 \alpha^{-1} \gamma_1 \gamma_2 \ldots \gamma_m) \xrightarrow{g_{m+2}} M(\beta_1) \xrightarrow{g_{m+3}} P(1) = M(\beta_1 \alpha \beta_1^{-1})$, where $g_1,g_2,\ldots,g_{m+3}$ are canonical inclusions or projections:}

        We have that $\beta_1$ does not start in a peak, but ends in one. That way, using notations from \cite{BR}, we have that $\beta_1 = {}_c(\epsilon_1)$ and that $(\beta_1)_h = \beta_1 \alpha \beta_1^{-1}$, where $(\beta_1)_h$ begins in a deep. Then \cite{BR}, Proposition on p. 172 gives us the following Auslander-Reiten sequence starting in $M(\beta_1)$:

        $$0 \rightarrow M(\beta_1) \xrightarrow{\begin{pmatrix}
             p & g_{m+3}
         \end{pmatrix}} M(\epsilon_1) \bigoplus M(\beta_1 \alpha \beta_1^{-1}) \xrightarrow{\begin{pmatrix}
             i \\ -g_1
         \end{pmatrix}} M(\beta_1 \alpha^{-1}) \rightarrow 0$$

         where $i, g_{m+3}$ are canonical inclusions and $p, g_1$ are canonical projections.

         We have that $\beta_1 \alpha^{-1}$ does not start in a peak. Since $(\beta_1 \alpha^{-1})_h = \beta_1 \alpha^{-1} \gamma_1$, we have a canonical inclusion $g_2: M(\beta_1 \alpha^{-1}) \hookrightarrow M(\beta_1 \alpha^{-1} \gamma_1)$, which is irreducible.

         In the same manner, $\beta_1 \alpha^{-1} \gamma_1$ does not start in a peak. We have that $(\beta_1 \alpha^{-1} \gamma_1)_h = \beta_1 \alpha^{-1} \gamma_1 \gamma_2$, and thus we have a canonical inclusion $g_3: M(\beta_1 \alpha^{-1} \gamma_1) \hookrightarrow M(\beta_1 \alpha^{-1} \gamma_1 \gamma_2)$, which is irreducible.

         And so on we can construct the morphisms $g_4,\ldots,g_{m+1}$. Now, since $\beta_1$ does not start in a deep and $(\beta_1)_c = \beta_1 \alpha^{-1} \gamma_1 \gamma_2 \ldots \gamma_m$, we have a canonical projection $g_{m+2}: M(\beta_1 \alpha^{-1} \gamma_1 \gamma_2 \ldots \gamma_m) \twoheadrightarrow M(\beta_1)$, which is irreducible.

         It remains to see that $g_1,\ldots,g_{m+3}$ is a sectional path. In order to do that we apply the Proposition on p. 172 of \cite{BR} at the sub-items below:
         
         \begin{itemize}
             \item We have that $\beta_1 \alpha \beta_1$ starts and ends in a peak, with $\beta_1 \alpha \beta_1 = {}_c(\alpha)_c$. Thus $\tau^{-1} M(\beta_1 \alpha^{-1} \beta_1) = M(\alpha) \neq M(\beta_1 \alpha^{-1} \gamma_1)$.

             \item We have that $\beta_1 \alpha^{-1}$ does not start in a peak, but ends in one, with $\beta_1 \alpha^{-1} = {}_c(\alpha^{-1})$. Thus $\tau^{-1} M(\beta_1 \alpha^{-1}) = M((\alpha^{-1})_h) = M(\alpha^{-1} \gamma_1) \neq M(\beta_1 \alpha^{-1} \gamma_1 \gamma_2)$.

             \item We have that $\beta_1 \alpha^{-1} \gamma_1$ does not start in a peak, but ends in one, with $\beta_1 \alpha^{-1} \gamma_1 = {}_c(\alpha^{-1} \gamma_1)$. Thus $\tau^{-1} M(\beta_1 \alpha^{-1} \gamma_1) = M((\alpha^{-1} \gamma_1)_h) = M(\alpha^{-1} \gamma_1 \gamma_2) \neq M(\beta_1 \alpha^{-1} \gamma_1 \gamma_2 \gamma_3)$.

             \item And so on we can conclude that $\tau^{-1} M(\beta_1 \alpha^{-1} \gamma_1 \ldots \gamma_l) = M( \alpha^{-1} \gamma_1 \ldots \gamma_{l+1})$ for every $l$ between 2 and $m-1$.

             \item Also we may not have something like $\tau^{-1} M(\beta_1 \alpha^{-1} \gamma_1 \ldots \gamma_l) = M(\beta_1 \alpha \beta_1^{-1})$, because $M(\beta_1 \alpha \beta_1^{-1})$ is projective.
         \end{itemize}

         That concludes the proof of Fact 2.

        \textit{Fact 3: The path $I(n) \xrightarrow{f_n} \ldots \xrightarrow{f_4} I(3) \xrightarrow{f_3} S(2) \xrightarrow{f_2} P(1) \xrightarrow{g_1} \tau^{-1} M(\beta_1) \xrightarrow{g_2} M(\beta_1 \alpha^{-1} \gamma_1) \xrightarrow{g_3} M(\beta_1 \alpha^{-1} \gamma_1 \gamma_2) \xrightarrow{g_4} \ldots \xrightarrow{g_{m+1}} M(\beta_1 \alpha^{-1} \gamma_1 \gamma_2) \xrightarrow{g_{m+2}} M(\beta_1) \xrightarrow{g_{m+3}} P(1) \xrightarrow{f_1} \tau^{-1} S(2)$ is sectional:}

        Having proved Facts 1 and 2, it only remains to see that $M(\beta_1) \neq S(2)$, which means $S(2) \neq \tau\tau^{-1} M(\beta_1)$ and $M(\beta_1) \neq \tau \tau^{-1} S(2)$.

\end{proof}

\begin{proof}[Proof of the Main Theorem]

    Observe that, using Lemmas~\ref{lem:s2 p1 t-1s2} and~\ref{lem:sectional path}, we have the following configuration in the Auslander-Reiten quiver of $A(n,m)$:
    
   \begin{displaymath}
    \xymatrix{
        I(n) \ar[dr]^{f_n} &&&&&&& \\
        & \ddots \ar[dr]&&&&&& \\
        && I(3) \ar[dr]^{f_3}&&&&& \\
        &&& S(2) \ar[dr]^{f_2} \ar@{.}[rr] && \tau^{-1} S(2) && \\
        &&&& P(1) \ar[ur]^{f_1} \ar[dr]^{g_1} &&& \\
        &&& M(\beta_1) \ar@{.}[rr] \ar[ur]^{g_{m+3}}&& \tau^{-1} M(\beta_1) \ar[dr]^{g_2}&& \\
        &&&&&& \ddots \ar[dr]^{g_{m+2}}& \\
        &&&&&&& M(\beta_1)}
    \end{displaymath}

where $f_1,\ldots,f_n,g_1,\ldots,g_{m+3}$ are morphisms satisfying the properties described in Lemma~\ref{lem:sectional path}, and $S(2) \xrightarrow{f_2} P(1) \xrightarrow{f_1} \tau^{-1} S(2)$ actually forms an Auslander-Reiten sequence with indecomposable middle term, which is $P(1)$. From that we have $f_1 f_2 = 0$ e and thus $f_1 f_2 \ldots f_n = 0$. Using the Igusa-Todorov Theorem (\cite{IT1}, \S13), we obtain that $f_1 g_{m+3} \ldots g_1 f_2 \ldots f_n \in \rad^{n+m+3} \setminus \rad^{n+m+4}$.  Then, if we take $f_1' = f_1 + f_1 g_{m+3} \ldots g_1$, we have that $f_1'$ is irreducible and therefore that $f_1' f_2 \ldots f_n$ is a composite of $n$ irreducible morphisms in $\rad^{n+m+3} \setminus \rad^{n+m+4}$, as we wanted to prove.
\end{proof}

\section*{Acknowledgements}

This work is part of the PhD thesis of the first named author, developed under supervision by the second named author. The authors gratefully acknowledge financial support by São Paulo Research Foundation (FAPESP), grants \#2020/13925-6 and \#2022/02403-4 and by CNPq (grant Pq 312590/2020-2).

\end{document}